\newtheorem{theorem}{Theorem}[section]
\newtheorem{lemma}[theorem]{Lemma}
\theoremstyle{definition}
\newtheorem{definition}[theorem]{Definition}
\newtheorem{example}[theorem]{Example}
\theoremstyle{remark}
\newtheorem{remark}[theorem]{Remark}
\numberwithin{equation}{section}
\DeclareMathOperator{\Aut}{Aut}
\DeclareMathOperator{\Gal}{Gal}
\newcommand{\eps}{\varepsilon}
\newcommand{\CC}{\mathbb{C}}
\newcommand{\PP}{\mathbb{P}}
\newcommand{\QQ}{\mathbb{Q}}
\newcommand{\RR}{\mathbb{R}}
\newcommand{\ZZ}{\mathbb{Z}}
\newcommand{\mcM}{\mathcal{M}}
\newcommand{\Qbar}{\overline{\QQ}}
\begin{document}

\title{Period computations for covers of elliptic curves}

\author{Simon Rubinstein-Salzedo}
\address{Department of Mathematics, Dartmouth College, Hanover, NH 03755}
\email{simon.rubinstein-salzedo@dartmouth.edu}

\subjclass[2010]{11G32, 11J70, 14H30, 14H52, 14Q05}

\date{\today}

\keywords{Number theory, algebraic geometry}

\begin{abstract}
In this article, we construct algebraic equations for a curve $C$ and a map $f$ to an elliptic curve $E$, with pre-specified branching data. We do this by determining certain relations that the periods of $C$ and $E$ must satisfy and using these relations to approximate the coefficients to high precision. We then conjecture which algebraic numbers the coefficients are, and then we prove this conjecture to be correct.
\end{abstract}

\maketitle

\tolerance 5000

\section{Introduction}

Transcendental methods have long been used to solve problems in algebraic number theory. The most celebrated instances of such methods have been in Diophantine approximations and closely related problems, including Baker's solution of the class number one problem for imaginary quadratic fields in \cite{Baker67}.

But transcendental techniques have been used in other ways as well. In particular, the exact values of several integrals and infinite sums have been conjectured by first computing them to high precision and then recognizing the numbers that appear. This article is in the spirit of that approach: we will compute certain numbers, which we know \emph{a priori} to be algebraic, to high precision and then recognize them as roots of certain polynomials.

In this article, we will be interested in constructing a curve $C$ of genus 2 that admits a map $f$ of degree 5, branched at one point with local monodromy of type a 3-cycle, to a particular elliptic curve $E$, defined over $\QQ$. We shall do this by utilizing transcendental methods. By standard arguments presented in \S\ref{existence}, we can show that $C$ is defined over some number field $K$ and bound the degree of $K$. Then, we can determine various relations that the periods of a certain differential on $C$ must satisfy and use these relations to approximate the coefficients of $C$ and $f$ to very high precision. Once we have determined the coefficients to sufficiently high precision, we can use the LLL algorithm (see \cite{LLL82}) or continued fractions (see \cite{BvdP95}) in order to guess which numbers they are. It is then easy to prove that our answer is correct.

It is worth mentioning briefly some other places in mathematics where branched covers of curves come up, even though we do not need input from them in this article. To number theorists, branched covers are important in the study of (for example) \'etale cohomology and the \'etale fundamental group, which in turn provide information about points defined over arithmetically interesting fields, of our base curve. They can also be used to construct number fields with limited ramification, as is shown by Beckmann in \cite{Beck89} and used in practice by Roberts in \cite{Rob04} in the case of covers of $\PP^1-\{0,1,\infty\}$.

There is also a topological or complex analytic side to the story. A branched cover of an elliptic curve branched at one point is known as an origami. Origamis can be represented pictorially as in Figure \ref{swisscross}, and the topological viewpoint has been much studied, for example by Herrlich and Schmith\"usen in \cite{HS09}. Conversely, if $E$ is the elliptic curve $\CC/\ZZ[i]$, a square tile arrangement like the one shown in Figure \ref{swisscross} can be interpreted as a smooth curve $C$ (say of genus $g$) with a differential $\omega$ and a map $f:C\to E$ satisfying $f^\ast\frac{dx}{y}=\omega$; hence the square tiling determines a flat structure on a curve of genus $g\ge 1$, with finitely many cone points. This connection is surveyed, for instance, by Zorich in \cite{Zor06}. As we modify the flat structure on $E$ and carry the flat structure on $C$ along with it, we obtain a family of curves $C_t$ inside the moduli space $\mcM_g$. Such a curve, if written down explicitly with algebraic equations, is also of arithmetical interest, as Roberts's techniques are applicable here as well.

Another facet of the subject comes from the study of billiards on polygons, which has been much studied especially by McMullen in the genus two case, for example in \cite{McM05}. In this part of the story, the objects of study are the geodesics on a curve $C$; for example, we might wish for asymptotics on how many (foliations of) closed geodesics there are of length $\le x$, or to know whether a trajectory on a flat surface must be either a closed geodesic or equidistributed over $C$.

The structure of the paper is as follows. In \S\ref{thmstatement}, we present our main result: we produce an explicit example of a branched cover of an elliptic curve ramified at one point with local monodromy of type a 3-cycle, and prove that it actually is what we claim. Starting in \S\ref{existence}, we forget that we have found this cover and begin the path to its construction from scratch. In \S\ref{existence}, we explain why such a cover must exist and show that its coefficients can be assumed to be algebraic. In \S\ref{coeffs}, we use period computations to approximate the coefficients to very high precision. In \S\ref{algebraize}, we take a step back to discuss methods for detecting algebraic numbers given many digits of their decimal expansions, and we apply this to the coefficients we found in \S\ref{coeffs}. Then, in \S\ref{eqnmap}, we explain how to find the equation of the map $f:C\to E$, again by approximating the coefficients to high precision and then algebraizing. In \S\ref{further}, we suggest future directions this project can go.

\section{A branched cover of an elliptic curve} \label{thmstatement}

In this section, we state and prove the main theorem of this article:

\begin{theorem} \label{311curve} The genus-2 curve \[C:-\sqrt{5}y^2=x(x-1)(x-\kappa)(x-2\kappa+1)(x-2\kappa),\qquad \kappa=81+36\sqrt{5},\] admits a degree-5 map $f$ to the elliptic curve \[E:y^2=x^3-x\] branched only above $\infty$ with a triple point above $\infty$. The map is given by \[f(x,y)=(g(x),h(x)y),\] where \begin{align*} g(x) &= \frac{x^5+a_4x^4+a_3x^3+a_2x^2+a_1x+a_0}{b_2x^2+b_1x},\\ h(x) &= \frac{x^6+c_5x^5+c_4x^4+c_3x^3+c_2x^2+c_1x+c_0}{dx^2(x-2\kappa)^2},\end{align*} and \begin{align*} a_4 &= -45(9+4\sqrt{5}) \\ a_3 &= 660(161+72\sqrt{5}) \\ a_2 &= -3240(2889+1292\sqrt{5}) \\ a_1 &= 1980(51841+23184\sqrt{5}) \\ a_0 &= -324(930249+416020\sqrt{5}) \\ b_2 &= -100\sqrt{5}(2889+1292\sqrt{5}) \\ b_1 &= 1800\sqrt{5}(51841+23184\sqrt{5}) \\ c_5 &= -54(9+4\sqrt{5}) \\ c_4 &= 1030(161+72\sqrt{5}) \\ c_3 &= -7920(2889+1292\sqrt{5}) \\ c_2 &= 18780(51841+23184\sqrt{5}) \\ c_1 &= 216(930249+416020\sqrt{5}) \\ c_0 &= -1944(16692641+7465176\sqrt{5}) \\ d &= 1000\sqrt{5}(219602+98209\sqrt{5}). \end{align*} \end{theorem}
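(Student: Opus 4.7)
The theorem makes completely explicit algebraic claims about $C$, $f$, and the branching, so the strategy is direct verification in three steps: (a) $C$ is a smooth genus-$2$ curve, (b) $f$ defines a morphism $C\to E$ of degree $5$, and (c) the ramification of $f$ consists of a single triple point above $\infty_E$.

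Claim (a) is immediate: since $\kappa = 81+36\sqrt{5} > 1$, the five numbers $0,1,\kappa,2\kappa-1,2\kappa$ are pairwise distinct, so the defining quintic of $C$ is squarefree and $C$ is a smooth hyperelliptic curve of genus $2$. For claim (b), verifying that $f$ lands on $E$ reduces to the identity
\[
h(x)^2\, y^2 \;=\; g(x)\bigl(g(x)-1\bigr)\bigl(g(x)+1\bigr)
\]
in the function field of $C$. Substituting the defining equation $y^2 = -\tfrac{1}{\sqrt{5}}x(x-1)(x-\kappa)(x-2\kappa+1)(x-2\kappa)$ and clearing denominators turns this into a polynomial identity in $\QQ(\sqrt{5})[x]$ which one checks by expanding both sides and comparing coefficients in a computer algebra system; this is the one computational step of the proof and accounts for essentially all of its length. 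The degree of $f$ then follows by bookkeeping: $(x|_E)\circ f$ agrees with $g\circ (x|_C)$, which has degree $5\cdot 2 = 10$ on $C$, while $x|_E$ has degree $2$, forcing $\deg f = 5$.

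For claim (c), I combine Riemann--Hurwitz with a single local calculation. With $C$ of genus $2$, $E$ of genus $1$, and $\deg f = 5$, the ramification divisor has degree $2\cdot 2-2 = 2$. Let $P_\infty$ be the unique Weierstrass point above $x=\infty$ on $C$, with local parameter $s$ satisfying $x\sim c_1 s^{-2}$ and $y\sim c_2 s^{-5}$. From the leading behavior $g(x)\sim x^3/b_2$ and $h(x)\sim x^2/d$ at $x=\infty$, the pullback of the uniformizer $-X/Y$ at $\infty_E$ is asymptotic to a nonzero scalar multiple of $s^3$, so $e_{P_\infty}(f) = 3$. This one point already contributes $e_{P_\infty}-1 = 2$ to the ramification divisor, saturating the Riemann--Hurwitz bound, so $f$ is unramified everywhere else; the remaining two points of the fiber above $\infty_E$ are the Weierstrass points above $x=0$ and $x=2\kappa$ (these being the zeros of the denominator of $g$, since $-b_1/b_2 = 2\kappa$), giving the cycle structure $(3,1,1)$ as claimed. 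The main obstacle is the sheer size of the polynomial identity in step (b): it is entirely mechanical and admits no ambiguity once executed by computer algebra, but is too large to carry out confidently by hand.
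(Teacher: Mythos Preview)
Your proof is correct and follows the same overall strategy as the paper: verify directly that $f$ lands on $E$ by reducing to the rational-function identity
\[
h(x)^2\,x(x-1)(x-\kappa)(x-2\kappa+1)(x-2\kappa)=g(x)^3-g(x),
\]
and then analyze the ramification. The one genuine difference is in step~(c). The paper does not use Riemann--Hurwitz plus a local expansion; instead it pulls back the invariant differential $\omega=\frac{dx}{y}$ on $E$ and observes (via a second rational-function identity, essentially $g'(x)=c\cdot h(x)$) that $f^\ast\omega$ is a constant multiple of $\frac{dx}{y}$ on $C$, whose divisor is $2[\infty_C]$. This immediately exhibits the full ramification divisor in one stroke. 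Your route trades that second global identity for a single local order computation at $\infty_C$ together with the Riemann--Hurwitz degree count to rule out any further ramification. Both arguments are short; the paper's is marginally cleaner since the differential computation is self-contained and simultaneously identifies the ramification locus and indices, while yours has the mild advantage of needing only one large symbolic verification rather than two. Your added remarks---that $C$ is smooth, that $\deg f=5$ via the factorization through the hyperelliptic $x$-maps, and the identification of the two unramified preimages of $\infty_E$ as the Weierstrass points over $x=0$ and $x=2\kappa$---are correct and make explicit points the paper leaves implicit.
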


\begin{proof} In order to show that $f(x,y)$ gives a map $E\to C$, we need only check that whenever $(x,y)$ is a point on $C$, then $(g(x),h(x)y)$ is a point on $E$. This amounts to checking that if \[-\sqrt{5}y^2=x(x-1)(x-\kappa)(x-2\kappa+1)(x-2\kappa),\] then \[-\frac{1}{\sqrt{5}}h(x)^2y^2=g(x)^3-g(x),\] or \[h(x)^2x(x-1)(x-\kappa)(x-2\kappa+1)(x-2\kappa)=g(x)^3-g(x).\] This now amounts to verifying a formal identity of rational functions.

To show that the map is only branched above $\infty$, let $\omega=\frac{dx}{y}$ be the invariant differential on $E$. This differential has no zeros or poles. The pullback $f^\ast\omega\in\Omega^1_C$ is a holomorphic differential and therefore has zeros exactly at the branch points of $f$. Furthermore, if $f^\ast\omega$ has a zero of order $e-1$ at a point $P$, then $P$ has ramification index $e$. We compute $f^\ast\omega$ to be a constant multiple of $\frac{dx}{y}$, which has a double zero at $\infty$ and no other zeros or poles. Hence $f$ is branched only above $\infty$, with a triple point above $\infty$. \end{proof}

This proof, while completely correct, is nonetheless not entirely satisfactory, as it does not provide any mechanism by which one can discover the equations for $C$ and $f$ from scratch. In the remainder of the article, we present a method, based on transcendental techniques, to find these equations.

\section{The existence of $C$} \label{existence}

In this section, we explain why we know \emph{a priori} that such a curve $C$ and a map $f$ exist and can be defined over $\Qbar$, and we see how we can bound the degree of a number field over which they are defined.

We first show that $C$ and $f$ can be defined over $\CC$. Since there is an equivalence of categories between the category of compact Riemann surfaces with holomorphic maps and the category of smooth projective complex curves with algebraic maps, it suffices to show that the pair $(C,f)$ exists in the category of compact Riemann surfaces. As Riemann surfaces, $E\cong\CC/\ZZ[i]$. Hence, we can view $E$ as a fundamental parallelogram for the lattice $\ZZ[i]\subset\CC$, with opposite edges identified. We choose the square with vertices $0,1,1+i,i$ as a fundamental parallelogram for $\ZZ[i]$.

In order to construct $C$, we find a collection of five squares, together with gluing information, so that when we glue the appropriate edges, we obtain an orientable genus-2 surface with the correct ramification type. A diagram is given in Figure \ref{swisscross}. The map $f$, shown in Figure \ref{deg5map}, takes a point in some square of $C$ to the corresponding point in the square of $E$. It is branched only above the vertex of $E$, and it has the desired ramification type.

\begin{figure} \begin{center} \includegraphics[height=1in]{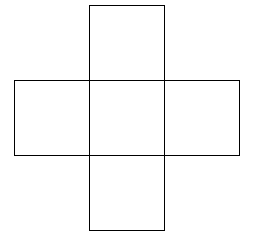} \end{center} \caption{This diagram gives the charts for a genus-2 curve with a degree-5 map to an elliptic curve. Here, we identify opposite edges. Above the branch point, we have one triple point, which corresponds to the four central vertices, which are glued together and two unramified points, which correspond to the other eight vertices. This diagram is sometimes called a Swiss cross.} \label{swisscross} \end{figure}

\begin{figure} \begin{center} \includegraphics[height=1in]{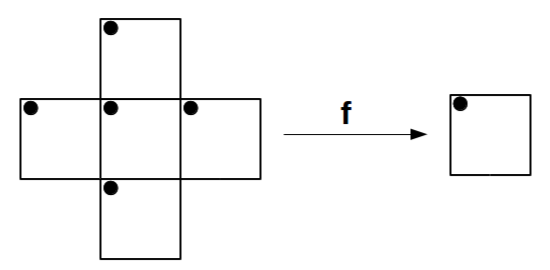} \end{center} \caption{This diagram shows all the preimages under $f$ in $C$ of the marked point in $E$.} \label{deg5map} \end{figure}

Now we know that $C$ and $f$ can be defined by equations over $\CC$. We will see next how to show that they can in fact be defined over $\Qbar$. To do this, we need the following definition.

\begin{definition} Let $(C,f)$ and $(D,g)$ be two branched covers of $E$. We say that they are isomorphic if there exists an isomorphism $h:C\to D$ that makes the diagram \[\xymatrix{C\ar[rr]^h\ar[dr]_f && D\ar[dl]^g \\ & E}\] commute. \end{definition}

The isomorphism classes of branched covers of $E$ can be neatly parametrized by group-theoretic data. The following result is a special case of the Riemann Existence Theorem; a complete statement can be found in \cite{Don11}, \S 4.2.2.

\begin{theorem} The isomorphism classes of connected branched covers of $E$ of degree $n$ branched only above $\infty$ are in bijection with equivalence classes of pairs $(\sigma,\tau)$ of permutations in $S_n$ which generate a transitive subgroup of $S_n$. Two pairs of permutations $(\sigma_1,\tau_1)$ and $(\sigma_2,\tau_2)$ are equivalent if there is some $\gamma\in S_n$ so that $\sigma_2=\gamma^{-1}\sigma_1\gamma$ and $\tau_2=\gamma^{-1}\tau_1\gamma$. \end{theorem}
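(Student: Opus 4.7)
The plan is to reduce the statement to the classical correspondence between covers of a topological space and permutation representations of its fundamental group. Let $X = E \setminus \{\infty\}$. Topologically $E$ is a 2-torus, so $X$ deformation retracts onto a wedge of two circles, and $\pi_1(X,p)$ is the free group on two generators $\alpha, \beta$; these generators can be chosen so that $[\alpha,\beta]$ is freely homotopic to a small positively oriented loop around $\infty$.

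Next I would invoke the standard dictionary: isomorphism classes of connected $n$-sheeted unramified covers of $X$ are in bijection with transitive homomorphisms $\rho : \pi_1(X,p) \to S_n$, taken up to $S_n$-conjugation, where the conjugation records the choice of labeling of the fiber over $p$. Since $\pi_1(X,p)$ is free on $\alpha,\beta$, such a homomorphism is determined freely by the pair $(\sigma,\tau) = (\rho(\alpha), \rho(\beta))$, with no relation imposed, and transitivity of $\langle \sigma, \tau \rangle$ corresponds exactly to connectedness of the cover. This yields the asserted bijection on the level of topological covers of the punctured curve.

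Finally I would extend from $X$ back up to $E$. An unramified topological cover of a Riemann surface inherits a unique complex structure making the projection holomorphic, so I may work holomorphically. To extend across $\infty$, one uses the local model: the restriction of any finite unramified cover to a punctured disk around $\infty$ is a disjoint union of copies of $z \mapsto z^e$, each of which extends uniquely to a branched cover of the full disk. Gluing in these local extensions produces a compact holomorphic branched cover, algebraic by GAGA, with ramification above $\infty$ encoded by the cycle structure of $[\sigma,\tau]$.

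The main obstacle is really the bookkeeping about the equivalence relation: one must check that two branched covers of $E$ are isomorphic as covers (in the sense of the definition given just above the theorem) if and only if the corresponding pairs $(\sigma,\tau)$ are simultaneously conjugate in $S_n$. This reduces to the observation that an isomorphism of branched covers over $E$ restricts to an isomorphism of unramified covers of $X$, and that, under the dictionary above, such isomorphisms act on the pair $(\sigma,\tau)$ by exactly the $S_n$-conjugation built into the stated equivalence relation.
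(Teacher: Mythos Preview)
Your sketch is correct and is exactly the standard argument: pass to the punctured torus $X=E\setminus\{\infty\}$, use that $\pi_1(X)$ is free on two generators so that monodromy representations are parametrized by arbitrary pairs $(\sigma,\tau)$, identify connectedness with transitivity, and then fill in the puncture via the local model $z\mapsto z^e$ on a disk. The bookkeeping you flag about simultaneous conjugacy is also handled correctly.

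As for comparison with the paper: the paper does not actually prove this theorem. It is stated as a special case of the Riemann Existence Theorem, with a reference to \S4.2.2 of Donaldson's book for a complete statement, and no argument is given in the text. So there is nothing to compare against; you have supplied the expected proof where the paper simply quotes the result.
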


The most important consequence of this result for us is that there are only finitely many isomorphism classes of branched covers of $E$ of degree $n$ branched only above $\infty$. In fact, we can even bound the number, and in low-degree cases compute it exactly.

In this article, we are concerned with degree-5 covers branched only above $\infty$, with local monodromy of type a 3-cycle. The equivalence classes of such covers are therefore in bijection with equivalence classes of pairs of permutations $(\sigma,\tau)$ in $S_5$, generating transitive permutation groups, and for which the commutator of $\sigma$ and $\tau$ is a 3-cycle. There are 27 such equivalence classes.

Since $E$ is defined over $\QQ$ and the point at $\infty$ is in $E(\QQ)$, the group $\Aut(\CC/\QQ)$ acts on the set of isomorphism classes of branched covers of $E$ of degree $n$ branched only above $\infty$, by acting on the coefficients of a representative $(C,f)$ of an isomorphism class of covers. Since there are only finitely many such isomorphism classes, the action of $\Aut(\CC/\QQ)$ on the set of isomorphism classes of covers factors through $\Gal(K/\QQ)$, for some number field $K$.

At this stage, we need a pair of definitions.

\begin{definition} \begin{enumerate} \item The field of moduli of an isomorphism class of a cover $(C,f)$ (or just of the cover) is the fixed field of the stabilizer in $\Aut(\CC/\QQ)$ of the isomorphism class of $(C,f)$. \item A field $K$ is a field of definition for an isomorphism class $[(C,f)]$ of a cover (or simply of a cover) if there is some representative $(C,f)$ of that isomorphism class so that the equations for $C$ and $f$ have coefficients in $K$. \end{enumerate} \end{definition}

Note that there are many fields of definition for one (isomorphism class of) cover. In the setting above, $K$ contains the field of moduli for $C$, for all such covers $C$.

The field of moduli is the intersection (inside a fixed algebraic closure $\Qbar$ of $\QQ$) of all fields of definition of $(C,f)$ algebraic over $\QQ$. However, the field of moduli $K$ for a cover $C$ is not always itself a field of definition. That is, there is not always a model of the pair $(C,f)$ with coefficients in $K$, and an example can be found in \cite{CG94}. However, it is frequently the case that the field of moduli is a field of definition, and there are several sufficient conditions for this to be the case. D\`ebes and Douai in \cite{DD97} show that the obstruction for the field of moduli to be a field of definition is measured by the nonvanishing of several cohomology classes, and they deduce the following as a corollary:

\begin{theorem}[D\`ebes-Douai] If $(C,f)$ is a cover of a curve $E$, and the monodromy group of $f$ has trivial center, then the field of moduli is a field of definition. \end{theorem}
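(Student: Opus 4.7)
The plan is to apply Weil's descent criterion after passing to the Galois closure. Let $K$ denote the field of moduli of $(C,f)$ and set $\Gamma_K = \Gal(\Qbar/K)$. By definition of the field of moduli, for each $\sigma \in \Gamma_K$ there exists an isomorphism $\phi_\sigma : (C^\sigma, f^\sigma) \to (C,f)$ of covers of $E$; to obtain a $K$-model it suffices to choose the family $\{\phi_\sigma\}$ so that the cocycle condition $\phi_{\sigma\tau} = \phi_\sigma \circ \phi_\tau^\sigma$ holds, and the obstruction is a class in $H^2(\Gamma_K, \Aut_E(C,f))$. The trivial-center hypothesis on the monodromy group $G$ will be used to kill this obstruction.

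First I would pass from $(C,f)$ to its Galois closure $\tilde f : \tilde C \to E$, so that $C$ is recovered as the quotient $\tilde C/H$ by a point-stabilizer $H \le G$ in the generic fibre and the deck group of $\tilde f$ is canonically identified with $G$. Because the Galois closure is intrinsic to $(C,f)$, each $\phi_\sigma$ lifts to an isomorphism of mere Galois covers $\tilde\phi_\sigma : (\tilde C^\sigma, \tilde f^\sigma) \to (\tilde C, \tilde f)$. The key observation is that the automorphism group of $\tilde C$ as a $G$-cover---that is, $E$-automorphisms commuting with every deck transformation---equals $Z(G)$: such an automorphism lies in the deck group $G$, and $G$-equivariance then forces it into the center. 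Under the hypothesis $Z(G) = 1$ this group is trivial, so once each $\tilde\phi_\sigma$ has been promoted to a $G$-cover isomorphism it is uniquely determined, and the cocycle identity $\tilde\phi_{\sigma\tau} = \tilde\phi_\sigma \circ \tilde\phi_\tau^\sigma$ holds tautologically. Weil descent then yields a $K$-model $\tilde C_0$ of the $G$-cover $\tilde C$, and since $G$ acts on $\tilde C_0$ by $K$-automorphisms, the quotient $\tilde C_0/H$ gives a $K$-model of $(C,f)$.

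The main obstacle I anticipate is the promotion, in the previous paragraph, of each mere Galois-cover isomorphism $\tilde\phi_\sigma$ to a $G$-cover isomorphism. A mere cover isomorphism $\tilde C^\sigma \to \tilde C$ preserves deck groups only up to an outer automorphism of $G$, so a priori the Galois action could twist the $G$-structure by a cocycle taking values in $\Out(G)$. The way to eliminate this obstruction is to exploit the extra datum $H$: because $(C,f)$ itself has field of moduli $K$, the $G$-conjugacy class of $H$ must be preserved by $\Gamma_K$, and this rigidifies the $\Out(G)$-ambiguity enough to permit a consistent promotion. Verifying this rigidification carefully, and showing that what remains of the obstruction lands in the trivial group $Z(G)$, is the technical heart of D\`ebes and Douai's argument in \cite{DD97}.
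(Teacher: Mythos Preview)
The paper does not prove this theorem: it is quoted as a result of D\`ebes and Douai and attributed to \cite{DD97}, with no argument given beyond the sentence preceding the statement. So there is no ``paper's own proof'' to compare against.

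That said, your outline is a faithful sketch of the standard argument behind the D\`ebes--Douai result. Passing to the Galois closure, identifying the automorphism group of the $G$-cover with $Z(G)$, and invoking Weil descent once the $2$-cocycle obstruction lands in the trivial group is exactly the mechanism. You are also right to flag the $\Out(G)$ issue as the genuine subtlety: a mere-cover isomorphism only determines the deck-group identification up to an inner automorphism, and controlling this is where the real work in \cite{DD97} lies. For the purposes of this paper, however, none of that is needed---the theorem is used as a black box, and a citation suffices.
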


Hence $C$ and $f$ can be defined over a number field, and even one whose degree we can bound effectively.

Our approach now will be to use period calculations to compute the coefficients of $C$ and $f$ to sufficiently high precision, so that we can detect which algebraic numbers they are. This will be done in the next section.

\section{Finding the coefficients} \label{coeffs}

Before we begin computing periods, we must place some normalization conditions on the curve $C$ and the map $f$ we are looking for. All genus-2 curves are hyperelliptic and hence can be written in the form $y^2=q(x)$, a quintic polynomial in $x$. By composing with a suitable linear transformations on the $x$- and $y$-coordinates if necessary, we may assume that $q$ is monic and has roots at 0 and 1. Hence we may write $C$ as \[y^2=x(x-1)(x-r_3)(x-r_4)(x-r_5)=:q(x).\] Let us also set $r_1=0$ and $r_2=1$ for convenience. Let us decide that $f$ will be branched above the point $O=(0:1:0)$ at $\infty$ of $E$. We can also narrow down the list of possible choices of critical points on $C$.

\begin{lemma} \label{Wptlemma} If $C$ is a genus-2 curve, $E$ is an elliptic curve, and $f:C\to E$ is a map with a unique critical point $p$ in $C$, then $p$ is a Weierstra\ss\ point of $C$. \end{lemma}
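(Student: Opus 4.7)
The plan is to combine Riemann--Hurwitz with the standard differential characterization of Weierstrass points.

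First I would compute the ramification of $f$. Let $n=\deg f$. Since $g_C=2$ and $g_E=1$, Riemann--Hurwitz gives
\[
2g_C-2 = n(2g_E-2) + \sum_{q\in C}(e_q-1),
\]
so $\sum_{q\in C}(e_q-1)=2$. Under the hypothesis that $p$ is the unique critical point, the only nonzero term on the right is the one at $p$, forcing $e_p-1=2$, i.e.\ $e_p=3$.

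Next I would pull back a nowhere-vanishing differential from $E$. Let $\omega$ be the invariant holomorphic differential on $E$; since $g_E=1$, $\omega$ has no zeros or poles. Then $f^\ast\omega\in\Omega^1_C$ is holomorphic, and at any point $q\in C$ the order of vanishing of $f^\ast\omega$ equals $e_q-1$ (because $\omega$ itself is nonzero at $f(q)$). By the previous paragraph, $f^\ast\omega$ vanishes to order exactly $2$ at $p$ and nowhere else; this is consistent with $\deg K_C = 2$.

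Finally I would invoke the characterization of Weierstrass points in genus $2$. Writing Riemann--Roch as
\[
h^0(kp)-h^0(K_C-kp)=k-1,
\]
one sees that $k$ is a Weierstrass gap at $p$ precisely when there exists a holomorphic differential vanishing to order exactly $k-1$ at $p$. In genus $2$ the non-Weierstrass gap sequence is $\{1,2\}$ and the Weierstrass gap sequence is $\{1,3\}$, so $p$ is a Weierstrass point if and only if some nonzero holomorphic differential on $C$ vanishes to order $\ge 2$ at $p$. The differential $f^\ast\omega$ produced above has exactly this property, so $p$ is a Weierstrass point.

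The only real content is the last step---the identification of Weierstrass points with existence of a holomorphic differential vanishing to order $\ge g$---but this is entirely standard (a direct consequence of Riemann--Roch / Serre duality), so there is no genuine obstacle; the argument is essentially a one-line application of Riemann--Hurwitz plus the Weierstrass gap theorem.
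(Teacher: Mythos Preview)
Your proof is correct and follows essentially the same route as the paper: both use Riemann--Hurwitz to see that the canonical divisor of $C$ is (linearly equivalent to) $2[p]$, and then invoke a standard characterization of Weierstrass points. The only cosmetic difference is that the paper phrases the final step on the function side ($h^0(C,2[p])=h^0(C,K_C)=2$, so there is a meromorphic function with a double pole at $p$), whereas you phrase it on the dual side (a holomorphic differential vanishing to order $2$ at $p$); these are equivalent via Riemann--Roch.
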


\begin{proof} Let $K_C$ and $K_E=0$ denote canonical divisors of $C$ and $E$, respectively. By the Riemann-Hurwitz formula, $K_C$ is linearly equivalent to $f^\ast K_E+2[p]=2[p]$. Hence $\dim H^0(C,2[p])=\dim H^0(C,K_C)=2$, so there is a meromorphic function on $C$ with a double pole at $p$ and no other poles, i.e., $p$ is a Weierstra\ss\ point. \end{proof}


Written in the form $y^2=q(x)$, where $q$ is a quintic polynomial in $x$, $C$ has a Weierstra\ss\ point at $\infty$, So, let us also decide that, if $\omega=\frac{dx}{y}$ is the invariant differential on $E$, then $f^\ast\omega=a\frac{dx}{y}\in\Omega^1_C$, for some $a\in\CC^\times$. This is equivalent to asserting that the triple point of $f$ above the point at $\infty$ of $E$ is the point at $\infty$ at $(0:1:0)\in C$. Hence, such a map will be a cover of the desired form.

Now, suppose we have such a map $f$. Let $\alpha$ be a loop in $C$ (or an element of $H_1(C,\ZZ)$, if we prefer). Then we have \[\int_\alpha f^\ast\omega=\int_{f_\ast\alpha} \omega.\] These integrals are known as periods. Now, since $H_1(E,\ZZ)$ is $\ZZ^2$, the periods of $E$ form a lattice in $\CC$. But since $H_1(C,\ZZ)$ is 4-dimensional, the periods of $\frac{dx}{y}\in\Omega^1_C$ are typically dense in $\CC$. In order to find a curve $C$ that admits such a map, it is necessary that its periods form a lattice, and in fact a lattice homothetic to that formed by $E$.

From Figure \ref{swisscross}, we can determine the induced map on $H_1$. We first choose preferred homology bases on $C$ and $E$, and then we determine how they appear on the Swiss cross diagram. \begin{figure} \begin{center} \includegraphics[height=1.5in]{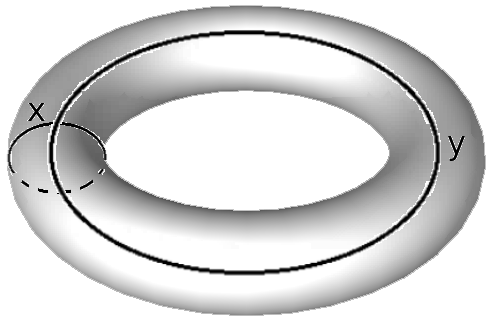} \end{center} \caption{An elliptic curve with a preferred basis of $H_1$ shown.} \label{torus} \end{figure} \begin{figure} \begin{center} \includegraphics[height=1.5in]{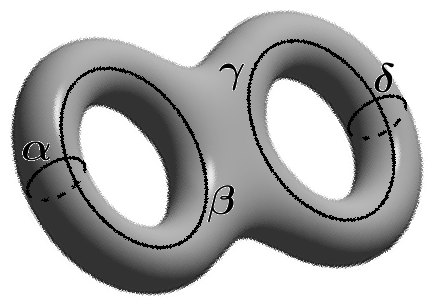} \end{center} \caption{A genus 2 curve with a typical basis of $H_1$ shown.} \label{genus2} \end{figure} Figure \ref{torus} pictures a basis of $H_1$ on $E$, and Figure \ref{genus2} pictures a basis of $H_1$ on $C$. We wish to arrange for $f$ to behave as follows with respect to these bases: \begin{align*} f_\ast(\alpha) &= m_1x \\ f_\ast(\beta) &= n_1y \\ f_\ast(\gamma) &= n_2y \\ f_\ast(\delta) &= m_2x, \end{align*} for certain integers $m_1,m_2,n_1,n_2$. To determine these integers, we draw the bases of $H_1(C)$ and $H_1(E)$ on our square diagrams. From Figures \ref{torus} and \ref{genus2}, we know the intersection pairing of the homology bases, and it suffices to find a basis on the square diagrams with the same intersection pairings and mapping properties. The correct pictures on the square diagrams are shown in Figures \ref{EChomology} and \ref{genus2homology}. \begin{figure} \begin{center} \includegraphics[height=.6in]{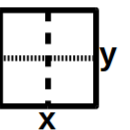} \end{center} \caption{A preferred homology basis of $H_1(E)$.} \label{EChomology} \end{figure} \begin{figure} \begin{center} \includegraphics[height=1.5in]{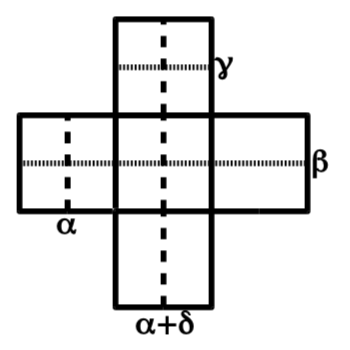} \end{center} \caption{A preferred homology basis of $H_1(C)$, drawn on the Swiss cross diagram.} \label{genus2homology} \end{figure} From these diagrams, we can see that \begin{align*} f_\ast\alpha &= x, \\ f_\ast\beta &= 3y, \\ f_\ast\gamma &= y, \\ f_\ast(\alpha+\delta)&=3x,\end{align*} and the first and fourth equations imply that \[f_\ast\delta=2x.\]

Now, we specialize to the elliptic curve $E:y^2=x^3-x$, which has $j$-invariant 1728. Computations with this elliptic curve are somewhat nicer than they are with arbitrary elliptic curves because its period lattice is a square lattice. Hence, we can take its fundamental periods to be $\varpi$ and $\varpi i$, where $\varpi\approx 5.2441151$.

We now identify $\alpha,\beta,\gamma,\delta$ with representatives of their respective homology classes which pass through two of the Weierstra\ss\ points of $C$, i.e., the preimages of $r_1$, $r_2$, $r_3$, $r_4$, $r_5$, and $\infty$ under the natural hyperelliptic map $C\to\PP^1$. We can therefore draw the images of $\alpha,\beta,\gamma,\delta$ under the hyperelliptic map in $\PP^1$; this picture is given in Figure \ref{genus2branch}. In order to avoid sign problems, we choose to orient the unlabeled gray loop in Figure \ref{genus2branch} clockwise and the others counterclockwise. When we do this, then the sum of the three homology classes corresponding to the gray loops is 0.

Figure \ref{genus2branch} shows us how the loops in Figure \ref{genus2} are related to periods of $\frac{dx}{y}\in\Omega^1_C$. The integral $\int_{r_1}^{r_2}\frac{dx}{y}$ is $\frac{1}{2}\int_{\alpha}\frac{dx}{y}$, for example. Since the sum of the three gray loops is $0\in H_1(C,\ZZ)$, we have \[\int_{r_1}^{r_2}\frac{dx}{y}-\int_{r_3}^{r_4}\frac{dx}{y}+\int_{r_5}^\infty\frac{dx}{y}=0,\] or \[\int_{r_3}^{r_4}\frac{dx}{y}=\int_{r_1}^{r_2}\frac{dx}{y}+\int_{r_5}^\infty\frac{dx}{y}=3\int_{r_1}^{r_2}\frac{dx}{y}\] since \[\int_{r_5}^\infty\frac{dx}{y}=2\int_{r_1}^{r_2}\frac{dx}{y}.\]

\begin{figure} \begin{center} \includegraphics[height=1.5in]{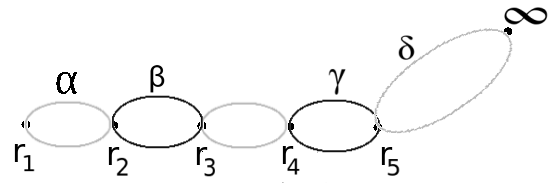} \end{center} \caption{The data of Figure \ref{genus2} has been rearranged. For example, the integral of a form over $\alpha$ is twice the integral from $r_1$ to $r_2$. This figure and Figure \ref{genus2} have the same symplectic intersection matrix. The unlabeled gray loop is oriented clockwise, whereas the others are oriented counterclockwise.} \label{genus2branch} \end{figure}

Now, we switch to a slightly different homology basis, so that we can consider the integrals \[\int_{r_j}^{r_{j+1}} \frac{dx}{y}\] for $j=1,2,3,4$. In particular, we are getting rid of $\delta$ in favor of the unlabeled gray loop in Figure \ref{genus2branch}. So, we wish to find a genus-2 curve for which these periods with respect to the differential $\frac{dx}{y}$ are proportional to $\varpi$, $3\varpi i$, $3\varpi$, and $\varpi i$. That is, if we let $C$ be the curve \[C:y^2=(x-r_1)(x-r_2)(x-r_3)(x-r_4)(x-r_5),\] then we have \begin{equation} I_j := 2\int_{r_j}^{r_{j+1}} \frac{dx}{\sqrt{(x-r_1)(x-r_2)(x-r_3)(x-r_4)(x-r_5)}}=Km_j\label{intrelations}\varpi\end{equation} for some constant $K$ and for $j=1,2,3,4$, where $m_1=1$, $m_2=3i$, $m_3=3$, and $m_4=i$. Notice now that if \begin{equation} r_1+r_5=r_2+r_4=2r_3, \label{rchoice} \end{equation} then we have \begin{align*} I_4 &= iI_1, \\ I_2 &= iI_3.\end{align*} so we need only arrange to have $I_3=3I_1$ for (\ref{intrelations}) to hold.

Now, it is not \emph{a priori} clear that we can find a curve $C$ whose underlying quintic has five real roots, or one for which (\ref{rchoice}) holds. Still, it is reasonable to begin the search by looking for such a $C$.

After this simplification, we see that $C$ is expected to have the form \[C:y^2=x(x-1)(x-\kappa)(x-2\kappa+1)(x-2\kappa)\] for some $\kappa$. The task of finding $\kappa$ remains. By the results of \S\ref{existence}, we know that $\kappa\in\Qbar$, if it exists at all.

We now find $\kappa$ to high precision using a Newton's method algorithm. We set \begin{align*} \pi_1(s) &= \int_0^1 \frac{dx}{\sqrt{x(x-1)(x-s)(x-2s+1)(x-2s)}}, \\ \pi_3(s) &= \int_s^{2s-1} \frac{dx}{\sqrt{x(x-1)(x-s)(x-2s+1)(x-2s)}}, \end{align*} and set $\rho(s) = \pi_3(s)/\pi_1(s)$. We search for an $s$ with $\rho(3)=3$.

To do this, we start with a seed value $s_0$ and define \begin{equation} s_{n+1} = s_n-\frac{\rho(s_n)-3}{\rho'(s_n)}. \label{newton} \end{equation} Then, assuming we have chosen an appropriate $s_0$, then $s_n$ converges to some $s$ with $\rho(s)=3$. Fortunately, the basin of attraction for this root contains the interval $(1,400)$, so it is very easy to choose a seed that converges.

In order to perform this computation, we need to be able to compute integrals to high precision. Fortunately, this can be done quite rapidly by using tanh-sinh quadrature, as described in \cite{BB11}. Furthermore, rather than attempting to compute $\rho'(s_n)$ in (\ref{newton}) symbolically, we pick some very small $\eps$ and use \[\rho'(s_n) \approx \frac{\rho(s_n+\eps)-\rho(s_n)}{\eps}.\]

If we start from a seed value of $s_0=2$ (which is a very poor guess, but one which admits a convergent sequence of $s_n$'s nonetheless) and take $\eps=10^{-10}$, we find that \begin{align*} s_1 &\approx 8.7404, \\ s_2 &\approx 33.2055, \\ s_3 &\approx 85.3301, \\ s_4 &\approx 139.6842, \\ s_5 &\approx 159.9489, \\ s_6 &\approx 161.4910, \\ s_7 &\approx 161.4984, \end{align*} and so forth. This sequence converges to \[\kappa\approx 161.49844718999242907073.\] The task remaining is to determine which algebraic number $\kappa$ is, i.e.\ to find a nonzero polynomial in $\ZZ[x]$ having $\kappa$ as a root.


\section{Determining $\kappa$ explicitly} \label{algebraize}

Let us, at this stage, make a brief digression to discuss how we can recognize certain algebraic numbers given their decimal expansions to several digits. A first question to ask is, given some number $r\in\RR$, how can we tell whether $r\in\QQ$? One possible method is to look for periodicity in the decimal expansion of $r$, but it might be that the period is quite large, and we lack sufficient precision to do that. A better way is to look at the continued fraction expansion of $r$; a number is rational if and only if its continued fraction expansion terminates. Given a truncation of the decimal expansion of $r$, we can predict that $r$ may be rational if one of its terms is very large.

\begin{example} Consider a number $r$ whose decimal expansion begins $1.88372093$. The continued fraction expansion based on the data we have is \[[1;1,7,1,1,2,2325581].\] Hence, we can guess that $r$ might be the number whose continued fraction expansion is \[[1;1,7,1,1,2],\] or $81/43$. Working out more digits can help to confirm this guess. \end{example}

Similarly, a real number $r$ is a root of a quadratic polynomial in $\ZZ[x]$ if and only if its continued fraction expansion is eventually periodic. This again we can recognize based on a truncation of its decimal expansion, although we need more digits in this case.

\begin{example} Let $r$ be a real number whose decimal expansion begins $64.89974874212324$. Its continued fraction then is roughly \[[64;1,8,1,38,1,8,1,38,1,8,1,42,\cdots].\] We may then guess that the 42 should really be a 38, and that the continued fraction expansion of $r$ might be $[64;\overline{1,8,1,38}]$, so that $r=45+6\sqrt{11}$. Again, computing more digits can help to confirm this guess. We require somewhat more digits to detect periodicity of the continued fraction expansion than we do for detecting rationals by terminating continued fraction expansion, as the length of the period can be quite large; for example, the period of $\sqrt{43}$ is already of length 10, and we would be likely not to detect it with under 13 digits of accuracy. Indeed, the {\tt algdep} function in PARI/GP \cite{PARI2} requires 17 digits in order to detect this number. \end{example}

For higher-degree irrational numbers, we can again recognize them (using a method described in \cite{KZ01}), although it takes far more digits to do so. To see if $r$ is a root of a degree-$n$ polynomial, we can use the LLL algorithm to try to find a vector $(a_0,\ldots,a_n)\in\ZZ^{n+1}\setminus\{0\}$ so that \[(a_nr^n+a_{n-1}r^{n-1}+\cdots+a_0)^2+\eps(a_n^2+\cdots+a_0^2)\] is very small, for a suitably small $\eps>0$. If we can do so, then $r$ may be a root of the polynomial $a_nx^n+a_{n-1}x^{n-1}+\cdots+a_0$.

Let us estimate the number of digits we ought to use in order for this method to apply, following \cite{BL00}. Suppose that we have a real number $r$, which is a root of a polynomial $f$ of degree at most $n$, with coefficients of at most $d$ digits. Then there are $(2\cdot 10^d+1)^{n+1}$ possible polynomials, so we expect to be able to match roughly $(d+1)(n+1)$ digits. Thus we need to have more than $(d+1)(n+1)$ digits in order to expect that we have found the correct polynomial.

Now let us return to the task of determining $\kappa$. The continued fraction of $\kappa$ is roughly \[[161;2,160,2,160,2,160,2,160,2,7,\ldots].\] A good guess, then, is that $\kappa=[161;\overline{2,160}]=81+36\sqrt{5}$. Indeed, computation of more digits helps to increase confidence in this guess.

It is worth mentioning here that this method does not \emph{prove} that $\kappa=81+36\sqrt{5}$ works, since this number might merely be exceptionally close to the true value of $\kappa$. However, once we have the equation for the map as well, we will be able to check that this number is actually correct if we desire to do so. In the opinion of the author, this should not be seen as a defect of the method any more than having to verify that a clever approach to solving any other problem works by justifying our approach \emph{a posteriori} by presenting a complete solution as we eventually do in the above proof.

\begin{remark} We might hope to be able to confirm that our guess of $\kappa$ is correct without explicitly producing the equation of the map and providing a proof in the style of that of Theorem \ref{311curve} in \S\ref{thmstatement}. Implicit in the results of \S\ref{existence} is a bound $D$ on the degree of $\kappa$. Since for any $B>0$ there are only finitely many algebraic numbers of degree at most $D$ with height at most $B$, it would suffice to produce a bound on the possible height of $\kappa$, so that computing $\kappa$ to sufficiently many digits could certify that it is what we believe it to be as an algebraic number. At the moment, we do not know how to produce such a height bound, but such an exploration would be interesting. \end{remark}

\section{The equation of the map} \label{eqnmap}

Now, working on the assumption that $\kappa=81+36\sqrt{5}$, we compute the defining equation of the map $f:C\to E$. Let $\Lambda_E$ and $\Lambda_C$ be the period lattices for $E$ and $C$, respectively. Then $\Lambda_E$ and $\Lambda_C$ are homothetic lattices in $\CC$. The map $C\to\CC/\Lambda_C$ is given by \begin{align*} P &\mapsto\int_{(-\infty,0)}^P \frac{dx}{y}\mod\Lambda_C \\ &=\pm\int_{-\infty}^{x(P)} \frac{dx}{\sqrt{x(x-1)(x-\kappa)(x-2\kappa+1)(x-2\kappa)}}\mod\Lambda_C,\end{align*} where $x(P)$ is the $x$-coordinate of $P$ and the sign depends on which preimage of $x(P)$ under the hyperelliptic map $C\to\PP^1$ is chosen. The map $\CC/\Lambda_E\to E$ is given by $z\mapsto(\wp(z),\wp'(z))$, where $\wp$ and $\wp'$ are the Weierstra\ss\ $\wp$-function for the lattice $\Lambda_E$ and its derivative. There is also a scaling map $\CC/\Lambda_C\to\CC/\Lambda_E$. Composing these maps gives us the map $C\to E$. If we know the form of the map $f$ and values of $f(P)$ at enough points, then we can solve a system of linear equations to determine the coefficients to high precision. Then, just as before, we can convert the decimal expansions into concrete algebraic numbers.


Before we compute the map from $C$ to $E$, we change variables slightly, letting our genus-2 curve be \[-\sqrt{5}y^2=x(x-1)(x-\kappa)(x-2\kappa+1)(x-2\kappa).\] This does not change the isomorphism class of the curve, nor does it change the induced map $\PP^1\to\PP^1$ induced by the hyperelliptic involutions on $C$ and $E$, but it does make the coefficients of the $y$-coordinate (slightly) cleaner, casuing them to lie in $\QQ(\sqrt{5})$ rather than a quadratic extension of $\QQ(\sqrt{5})$.

The map $f:C\to E$ can be written as $f(x,y)=(g(x,y),h(x,y))$, where $g,h:C\to\PP^1$. Furthermore, the map $x:E\to\PP^1$ which remembers the $x$-coordinate has degree 2, and the map $y:E\to\PP^1$ which remembers the $y$-coordinate has degree 3. Hence, $g$ has degree 10, and $h$ has degree 15. Also, from the square diagram of Figure \ref{swisscross}, we can see that $f$ respects the hyperelliptic involution: the elliptic involution on the elliptic curve corresponds to a rotation by $\pi$ of the fundamental parallelogram; similarly, the hyperelliptic involution on the genus-2 curve corresponds to a rotation by $\pi$ of the Figure \ref{swisscross}. Hence, $f(x,-y)=(g(x,y),-h(x,y))$, so $g(x,y)$ only depends on $x$, and $h(x,y)/y$ only depends on $x$. Finally, $g$ has two simple poles. This allows us to determine that we can write \[g(x,y)=\frac{x^5+a_4x^4+a_3x^3+a_2x^2+a_1x+a_0}{b_2x^2+b_1x+b_0}.\] By approximating $g$ at enough points on $C$ to sufficiently high precision, we can solve for the coefficients $a_i$ and $b_i$ and algebraize as we did in \S\ref{algebraize}, and we find that \begin{align*} a_4 &= -45(9+4\sqrt{5}) \\ a_3 &= 660(161+72\sqrt{5}) \\ a_2 &= -3240(2889+1292\sqrt{5}) \\ a_1 &= 1980(51841+23184\sqrt{5}) \\ a_0 &= -324(930249+416020\sqrt{5}) \\ b_2 &= -100\sqrt{5}(2889+1292\sqrt{5}) \\ b_1 &= 1800\sqrt{5}(51841+23184\sqrt{5}) \\ b_0 &= 0.\end{align*}

\begin{remark} The coefficients in the map are surprisingly nice. Let $\eps=\frac{1+\sqrt{5}}{2}$; this is a fundamental unit in $\QQ(\sqrt{5})$. Then the coefficients in the map are (essentially) integral multiples of powers of $\eps$. We have \begin{align*} \eps^6 &= 9+4\sqrt{5}, \\ \eps^{12} &= 161+72\sqrt{5} \\ \eps^{18} &= 2889+1292\sqrt{5} \\ \eps^{24} &= 51841+23184\sqrt{5} \\ \eps^{30} &= 930249+416020\sqrt{5} \\ \eps^{36} &= 16692641+7465176\sqrt{5}. \end{align*} Hence, there is some homogeneity in the map: if we replace $x$ by $\eps^6x$, then all the coefficients are either integers or integral multiples of $\sqrt{5}$. This seems unlikely to be a coincidence, but at the moment, we lack an explanation for this phenomenon. \end{remark}

To compute $h(x,y)$, we recall that $g$ and $h$ satisfy the relation $-\sqrt{5}h^2=g^3-g$. Hence, we compute $g^3-g$. Also, we know that $h(x,y)$ can be written as $h_1(x)y$, so \[h_1(x)^2=\frac{g(x)^3-g(x)}{x(x-1)(x-\kappa)(x-2\kappa+1)(x-2\kappa)}.\] We therefore have \[h(x,y)=\frac{(x^6+c_5x^5+c_4x^4+c_3x^3+c_2x^2+c_1x+c_0)y}{dx^2 (x-2\kappa)^2},\] where \begin{align*} c_5 &= -54(9+4\sqrt{5}) \\ c_4 &= 1030(161+72\sqrt{5}) \\ c_3 &= -7920(2889+1292\sqrt{5}) \\ c_2 &= 18780(51841+23184\sqrt{5}) \\ c_1 &= 216(930249+416020\sqrt{5}) \\ c_0 &= -1944(16692641+7465176\sqrt{5}) \\ d &= 1000\sqrt{5}(219602+98209\sqrt{5}) \end{align*}

With everything now so concrete, we can verify that all our numerics were correct: that $f$ does in fact define a map $C\to E$, branched only above $\infty$, and with a triple point in $C$. This completes the construction of a curve $C$ which admits a map to $E$.

\section{Further Results and Possibilities} \label{further}

\subsection{Covers with other branching types}

The same method can be used to compute some branched covers of elliptic curves with other branching types and degrees. For example, there is a genus-2 curve $C$ admitting a map of degree 7 to the elliptic curve $y^2=x^3-x$, branched above $\infty$, with one triple point above $\infty$. The computations involved in finding the equation of this curve $C$ are similar to those given above, and an equation for $C$ has the same form \[C:y^2=x(x-1)(x-\kappa')(x-2\kappa'+1)(x-2\kappa'),\] where now $\kappa'=932+352\sqrt{7}+18\sqrt{5355+2024\sqrt{7}}$.

For certain branching types, there are different methods that can be used for computing branched covers in practice. The author in \cite{RS12} has shown how to produce covers for all elliptic curves and all odd degrees in the totally ramified case.

\subsection{Covers of other elliptic curves}

We can ask about computing covers of other elliptic curves as well. For any particular elliptic curve, computing such a cover should be similar to this case, some of the details may be somewhat more difficult. In particular, we will not be able to assume that $C$ has the symmetric form $y^2=x(x-1)(x-\kappa)(x-2\kappa+1)(x-2\kappa)$, as we did here; instead, we will need to use Newton's method on a system of three equations at once. However, this does not present any real difficulties. For other elliptic curves, though, we should not expect the coefficients to be defined over a quadratic extension of $\QQ$; the degree may be higher. Hence, it will require considerably more precision in order to determine their minimal polynomials.

Another approach is to try to find a family of genus-2 curves (in fact, a curve inside the moduli space $\mcM_2$) covering a family of elliptic curves. One way of doing this is by starting with the cover we found here and deforming it into a family. The author has made some steps in this direction in his PhD thesis \cite{RSthesis}, but the computations eventually take too long. We hope to improve the process so as to complete this task.

\section{Computations}

Computations for this paper were done using Mathematica \cite{Mathematica}, mpmath \cite{mpmath}, PARI/GP \cite{PARI2}, Sage \cite{sage}, and SymPy \cite{Sympy}.

\section*{Acknowledgements}

The author would like to thank Akshay Venkatesh for suggesting this problem and for many helpful discussions. He would also like to thank Jeffrey Danciger, Ilya Grigoriev, Michael Lipnowski, Jeremy Miller, and Ronen Mukamel for providing useful suggestions. He would also like to thank the referee for many helpful comments and for suggesting this proof of Lemma \ref{Wptlemma}.

\bibliographystyle{alpha}
\bibliography{origami311-2}

\end{document}